\documentclass[12pt]{amsart}
\usepackage{amsmath}
\usepackage{amsfonts}
\usepackage{amssymb}
\usepackage{pgf,tikz,pgfplots}
\pgfplotsset{compat=1.13}
\usepackage{mathrsfs}
\usetikzlibrary{arrows}

\linespread{1.1}
\setlength{\parskip}{0.2\baselineskip}
\setlength{\topmargin}{-4mm}
\setlength{\headheight}{8pt}
\setlength{\textheight}{213mm}

\setlength{\oddsidemargin}{0pt}
\setlength{\evensidemargin}{0pt}
\setlength{\textwidth}{170 mm}

\newcommand{\abs}[1]{\ensuremath{\left| #1 \right| }}
\newcommand{\newD}{D^+_{\mathrm{circ}}}
\newcommand{\newDD}{{\tilde{D}}^+_{\mathrm{circ}}}

\newcommand{\Rst}{\mathbb{R}}

\newcommand{\bR}{\mathbb{R}}

\newtheorem{lemma}{Lemma}
\newtheorem{theorem}{Theorem}

\author{Jos\'e Luis Romero}
\address{Faculty of Mathematics \\
University of Vienna \\
Oskar-Morgenstern-Platz 1 \\
A-1090 Vienna, Austria \\and
Acoustics Research Institute\\ Austrian Academy of Sciences\\Wohllebengasse 12-14, Vienna, 1040, Austria}
\email{jose.luis.romero@univie.ac.at, jlromero@kfs.oeaw.ac.at}

\thanks{J. L. R. gratefully acknowledges support from the Austrian Science Fund (FWF): Y 1199 and P 29462.}

\title{Sign retrieval in shift-invariant spaces with totally positive generator}

\begin{document}
\begin{abstract}
We show that a real-valued function $f$ in the shift-invariant space generated by a totally positive function of Gaussian type is uniquely determined, up to a sign, by its absolute values $\{|f(\lambda)|: \lambda \in \Lambda \}$ on any set $\Lambda \subseteq \mathbb{R}$ with lower Beurling density $D^{-}(\Lambda)>2$.
\end{abstract}
\maketitle\thispagestyle{empty}

We consider a \emph{totally positive function of Gaussian type}, i.e., a function $g \in L^2(\mathbb{R})$ whose Fourier transform factors as
\begin{equation}\label{eq_tpgt}
\hat g(\xi)= 
\int_{\mathbb{R}} g(x) e^{-2\pi i x \xi} dx =
C_0 e^{- \gamma \xi^2}\prod_{\nu=1}^m (1+2\pi i\delta_\nu \xi)^{-1},
\qquad
\xi \in \Rst,
\end{equation}
with $\delta_1,\ldots,\delta_m\in \mathbb{R}, C_0, \gamma >0, m \in \mathbb{N} \cup \{0\}$, and the \emph{shift-invariant space}
\begin{align*}
V^\infty(g) = \Big\{ f=\sum_{k \in \mathbb{Z}} c_k\, g(\cdot-k): c \in \ell^\infty(\mathbb{Z}) \Big\},
\end{align*}
generated by its integer shifts within $L^\infty(\mathbb{R})$. As a consequence of \eqref{eq_tpgt}, each $f \in V^\infty(g)$ is continuous, the defining series converges unconditionally in the weak$^*$ topology of $L^\infty$, and the coefficients $c_k$ are unique \cite[Theorem 3.5]{jm91}.

The shift-invariant space $V^\infty(g)$ enjoys the following sampling property \cite{GRS18}:
every separated set $\Lambda \subseteq \mathbb{R}$ with \emph{lower Beurling density}
\begin{align*}
D^{-}(\Lambda) := \liminf_{r \longrightarrow \infty} 
\inf_{x\in\mathbb{R}}
\frac{\# \big[ \,\Lambda \cap [x-r,x+r] \,\big]}{2r}
\end{align*}
strictly larger than $1$ provides the norm equivalence
\begin{align}\label{eq_samp}
||{f}||_{L^\infty(\mathbb{R})} \asymp ||{f|\Lambda}||_{\ell^\infty({\Lambda})},
\qquad f \in V^\infty(g).
\end{align}
(A similar property holds for all $L^p$ norms, $1 \leq p \leq \infty$; see \cite[Theorems 5.2 and 3.1]{GRS18}.)

In this article, we show the following uniqueness property for the absolute values of real-valued functions in $V^\infty(g)$.
\begin{theorem}[Sign retrieval]
	\label{th_main}
	Let $g$ be a totally positive function of Gaussian type, as in \eqref{eq_tpgt}, and
	$\Lambda \subseteq \mathbb{R}$ with lower Beurling density
	\begin{align}\label{eq_d_2}
	D^{-}(\Lambda)>2.
	\end{align}
	Assume that $f_1, f_2 \in V^\infty(g)$ are real-valued
	and $\abs{f_1} \equiv \abs{f_2}$ on $\Lambda$. Then either $f_1 \equiv f_2$ or $f_1 \equiv - f_2$.
\end{theorem}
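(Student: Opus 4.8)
The plan is to recast the hypothesis as a statement about the single product $(f_1-f_2)(f_1+f_2)$ and then to argue, by counting zeros, that this product cannot vanish on a set as dense as $\Lambda$ unless one of its factors is identically zero.

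First I would polarize. Set $u = f_1 - f_2$ and $v = f_1 + f_2$; both lie in $V^\infty(g)$ and are real-valued, and for every $\lambda \in \Lambda$ one has $u(\lambda)\,v(\lambda) = f_1(\lambda)^2 - f_2(\lambda)^2 = 0$ because $\abs{f_1} \equiv \abs{f_2}$ on $\Lambda$. Writing $Z(h) = \{x \in \bR : h(x) = 0\}$ for the zero set, this says exactly $\Lambda \subseteq Z(u) \cup Z(v)$. Since the desired conclusion $f_1 \equiv f_2$ or $f_1 \equiv -f_2$ is the same as $u \equiv 0$ or $v \equiv 0$, it suffices to show that $u$ and $v$ cannot both be nonzero.

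Two properties of $V^\infty(g)$ drive the argument. The first is analyticity: since $\hat g$ has Gaussian decay by \eqref{eq_tpgt}, the generator $g$ extends to an entire function that is bounded and rapidly decaying on each horizontal strip, so a bounded combination $\sum_k c_k g(\cdot - k)$ converges locally uniformly there; hence every $f \in V^\infty(g)$ is real-analytic on $\bR$. In particular a nonzero $f$ has isolated zeros, so $Z(f) \cap [-r,r]$ is finite for each $r$ and its density is meaningful. The second property is a zero-counting bound coming from total positivity: a nonzero $f \in V^\infty(g)$ has zero set of upper Beurling density at most $1$,
\begin{equation*}
D^{+}\big(Z(f)\big) := \limsup_{r \to \infty}\ \sup_{x \in \bR}\ \frac{\#\big[\,Z(f) \cap [x-r,x+r]\,\big]}{2r} \ \le\ 1 .
\end{equation*}
The heuristic is that $f = \sum_k c_k g(\cdot - k)$ carries one generator per unit cell of $\bZ$, and the variation-diminishing property of the totally positive system $\{g(\cdot - k)\}_k$ bounds the number of sign changes, hence of zeros, of such a combination by the number of active coefficients; over an interval of length $\sim 2r$ this is $\sim 2r$, giving density $\le 1$. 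This also explains the threshold $2$ in \eqref{eq_d_2}: it is twice the critical sampling density $1$ of $V^\infty(g)$.

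Granting these two facts, the theorem follows by density bookkeeping. Suppose for contradiction that $u \not\equiv 0$ and $v \not\equiv 0$. Then $Z(u), Z(v)$ are discrete with $D^{+}(Z(u)), D^{+}(Z(v)) \le 1$, and since counting functions are subadditive over unions, $D^{+}\big(Z(u) \cup Z(v)\big) \le D^{+}(Z(u)) + D^{+}(Z(v)) \le 2$. On the other hand $\Lambda \subseteq Z(u) \cup Z(v)$ gives $\#[\Lambda \cap I] \le \#[(Z(u) \cup Z(v)) \cap I]$ on every interval $I$, whence $D^{-}(\Lambda) \le D^{+}\big(Z(u) \cup Z(v)\big) \le 2$, contradicting \eqref{eq_d_2}. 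Therefore $u \equiv 0$ or $v \equiv 0$, which is the claim. The main obstacle is the zero-counting bound for \emph{bounded}, rather than finitely supported, coefficient sequences: for finite real combinations the classical Schoenberg--Karlin theory of totally positive functions bounds the number of zeros by the number of sign changes of $(c_k)$, and the work is to localize this to $[-r,r]$ in the presence of infinitely many coefficients, using the decay of $g$ (again from the Gaussian factor in \eqref{eq_tpgt}) so that only $\sim 2r$ coefficients matter up to a bounded error. I expect this localized variation-diminishing estimate to be the technical heart of the proof, with the polarization and the density count around it being routine.
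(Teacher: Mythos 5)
Your polarization step and the density bookkeeping around it are sound, and in fact they mirror the paper's own splitting of $\Lambda$ (the paper writes $\Lambda = \Lambda_1 \cup \Lambda_2$ with $f_1-f_2$ vanishing on $\Lambda_1$ and $f_1+f_2$ on $\Lambda_2$, which is your $\Lambda \subseteq Z(u) \cup Z(v)$). The gap is the lemma everything rests on: the claim that every nonzero $f \in V^\infty(g)$ satisfies $D^{+}(Z(f)) \le 1$ is false, not merely unproven, and the ``localized variation-diminishing estimate'' you defer as the technical heart cannot exist. The variation-diminishing property of $\{g(\cdot-k)\}_k$ is global and does not localize. For the Gaussian $g(x)=e^{-ax^2}$, write $f(x)=\sum_k c_k g(x-k)=e^{-ax^2}P(e^{2ax})$ with $P(w)=\sum_k c_k e^{-ak^2}w^k$, so that real zeros of $f$ correspond to positive zeros of $P$ via $w=e^{2ax}$. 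Choosing $P(w)=\prod_{j=0}^{2L}(w-e^{aj})$ gives a finite section of $V^\infty(g)$ whose real zeros are exactly $\{j/2: 0\le j\le 2L\}$: that is $2L+1$ zeros in $[0,L]$, local density $2$, even though the (alternating) coefficient sequence has only about one sign change per unit length of its support $[0,2L+1]$. So zeros in a window are not controlled by coefficient sign changes in a comparable window. Moreover, this defect survives passage to genuine infinite sums: after $\ell^\infty$-normalization of the coefficients, the block above oscillates between its zeros with amplitude at least $e^{-C_a L^2}$, while a bounded-coefficient tail at distance $d$ contributes at most $C_a e^{-ad^2}$; hence one can concatenate such blocks with lengths $L_m \to \infty$ separated by gaps $\asymp L_m$, and the clustered zeros persist. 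The result is a single nonzero $f \in V^\infty(g)$ with $D^{+}(\{f=0\}) \ge 2$, so your key inequality fails already for the Gaussian generator, and your final contradiction never materializes.

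This is precisely the difficulty the paper is engineered around, and it dictates the shape of its argument. What is true (Theorem \ref{th_unique}) is that $\newD(\{f=0\}) \le 1$ for nonzero $f$, where $\newD$ is the \emph{circular} density \eqref{eq_newd}, an average of the zero-counting function over radii; such radial averaging is exactly what Jensen's formula delivers (in the Gaussian case, via the entire extension of $f$ with growth $e^{ay^2}$ and the $i\tfrac{\pi}{a}\bZ$-periodicity of its complex zero set), and the general case in \eqref{eq_tpgt} follows by induction on $m$ using Rolle's theorem. The structural point is that a usable density must satisfy three things simultaneously: it must dominate $D^{-}$ so that hypothesis \eqref{eq_d_2} transfers, it must be subadditive so that the union bound works, and nonzero functions must have zero sets of density at most $1$. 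Your $D^{+}$ has the first two properties but, as shown above, not the third; the naive $D^{-}$ argument dismissed in the paper's introduction has the first and third (via the sampling theorem) but not the second. Producing a density with all three properties, and proving the zero-set bound for it, is the actual content of the paper; in your formulation that step is not technical but impossible.
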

See \cite{MR3977121} for motivation for sign retrieval in shift-invariant spaces. When $g$ is a Gaussian function - corresponding to $m=0$ in \eqref{eq_tpgt} - Theorem \ref{th_main} was recently obtained in \cite[Theorem 1]{gro20}. Here, that result is extended to all totally positive functions of Gaussian type.\footnote{In contrast to \cite[Theorem 1]{gro20}, in Theorem \ref{th_main} it is not assumed that $\Lambda$ is separated, or even relatively separated. See \cite{gro20} for an example of a separated set with $D^{-}(\Lambda)<2$ for which sign retrieval fails in a shift-invariant space with Gaussian generator.}

The intuition behind Theorem \ref{th_main} is as follows. Suppose that $\abs{f_1} \equiv \abs{f_2}$ on $\Lambda$, and split $\Lambda$ into
\begin{align}\label{eq_split}
\begin{aligned}
\Lambda_1 &= \{ \lambda \in \Lambda: f_1(\lambda) = f_2(\lambda)\},
\\
\Lambda_2 &= \{ \lambda \in \Lambda: f_1(\lambda) = -f_2(\lambda)\}.
\end{aligned}
\end{align}
Then $f_1 - f_2$ vanishes on $\Lambda_1$ while $f_1 + f_2$ vanishes on $\Lambda_2$. Under \eqref{eq_d_2}, one may expect one of the two subsets $\Lambda_j$ to have lower Beurling density larger than 1. The sampling inequalities \eqref{eq_samp} would then imply that either $f_1 - f_2$ or $f_1 + f_2$ are identically zero. This argument breaks down, however, because Beurling's lower density is not subadditive. For example, $\Lambda_1 := \Lambda \cap (-\infty,0]$ and $\Lambda_2 := \Lambda \cap (0,\infty)$ have always zero lower Beurling density. The proof of the sign retrieval theorem
for Gaussian generators in \cite{gro20} resorts instead to a special property of the Gaussian function, namely that $V^\infty(g) \cdot V^\infty(g)$ is contained in a dilation of $V^\infty(\tilde{g})$ by a factor of $2$, where $\tilde{g}$ is another Gaussian function. Thus, in the Gaussian case, the sampling theorem can be applied after rescaling to the set $\Lambda$ to conclude that $(f_1-f_2) \cdot (f_1+f_2) \equiv 0$, and, by analyticity, that either $f_1 \equiv f_2$ or $f_1 \equiv - f_2$. A similar argument applies to Paley-Wiener spaces \cite[Theorem 2.5]{MR3656501}. We are unaware of an analogous dilation property for general totally positive generators.

To prove Theorem \ref{th_main} for all totally positive generators of Gaussian type we take a different route. We define the \emph{upper average circular density} of a set $\Lambda \subseteq \mathbb{R}$ as
\begin{align}\label{eq_newd}
\newD(\Lambda) = \limsup_{r \longrightarrow \infty} \frac{4}{\pi r^2}
\int_0^r \sum_{\lambda \in \Lambda \cap [-t,t]} 
\sqrt{t^2 - \lambda^2} \,
\frac{dt}{t},
\end{align}
with the convention that $\newD(\Lambda)=\infty$, if $\Lambda$ is uncountably infinite. The density is named circular because, in \eqref{eq_newd}, each point $\lambda \in [-t,t]$ is weighted with the measure of the largest vertical segment $\{\lambda\} \times (-a,a)$ contained in the two-dimensional open disk $B_t(0) \subseteq \mathbb{R}^2$.

The upper average circular density can be alternatively described as follows:
as shown in Lemma \ref{lemma_circ} below, for any lattice $\alpha \mathbb{Z}$, $\alpha>0$,
\begin{align}\label{eq_dens_2}
\newD(\Lambda)= \limsup_{r \longrightarrow \infty}
\frac{2\alpha}{\pi r^2} \int_0^r \# \big[ (\Lambda \setminus \{0\} \times \alpha \mathbb{Z}) \cap B_t(0)\big] \,\frac{dt}{t}.
\end{align}
From here, it follows easily that $\newD$ dominates Beurling's lower density:
\begin{align}\label{eq_dens_3}
\newD(\Lambda) \geq D^{-}(\Lambda);
\end{align}
see Lemma \ref{lemma_circ} below. We call $\newD$ an upper density because, due to the sublinearity of $\limsup$,
\begin{align}\label{eq_sub}
\newD(\Lambda_1 \cup \Lambda_2) \leq \newD(\Lambda_1) + \newD(\Lambda_2),
\end{align}
for any two sets $\Lambda_1, \Lambda_2 \subseteq \mathbb{R}$.

Below we prove the following uniqueness result formulated in terms of the upper average circular density of the zero set $\{f=0\}$ of a function $f$ (counted without multiplicities).

\begin{theorem}[Uniqueness theorem]\label{th_unique}
Let $g$ be a totally positive function of Gaussian type, as in \eqref{eq_tpgt}.
Let $f \in V^\infty(g)$ be non-zero. Then
$\newD(\{f=0\}) \leq 1$.
\end{theorem}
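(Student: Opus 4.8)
The plan is to reduce Theorem \ref{th_unique} to a \emph{uniform local bound} on the number of real zeros of $f$, and then to feed that bound into the definition \eqref{eq_newd} of the circular density. Write $\Lambda=\{f=0\}$. Since the Gaussian factor $e^{-\gamma\xi^2}$ in \eqref{eq_tpgt} makes $g$, and hence every $f\in V^\infty(g)$, extend to an entire function, $\Lambda$ is discrete and the counts below are finite. The key structural input I would isolate is the following: there is a constant $N=N(g)$, depending only on the generator, such that
\begin{equation*}
\#\big(\Lambda\cap I\big)\leq |I|+N
\end{equation*}
for every bounded interval $I\subseteq\mathbb{R}$ (zeros counted without multiplicity). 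This is a \emph{density-one} statement at unit scale, and it is exactly where total positivity must enter: by the variation-diminishing property of totally positive generators (Schoenberg--Karlin; compare the zero counting underlying the sampling theorem of \cite{GRS18}), the number of sign changes — indeed the number of zeros — of $\sum_k c_k g(\cdot-k)$ is controlled by the number of sign changes of the sequence $(c_k)$, and over a window of $\sim|I|$ consecutive integer shifts this is at most $|I|+O(1)$.

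Granting this local bound, the passage to $\newD(\Lambda)\leq 1$ is a short and robust computation, which I would carry out as follows. The weight $w(\lambda)=\sqrt{t^2-\lambda^2}$ in \eqref{eq_newd} is nonnegative, even, and decreasing in $|\lambda|$, with superlevel sets $\{w>s\}=(-\sqrt{t^2-s^2},\sqrt{t^2-s^2})$. A layer-cake estimate against the counting measure of $\Lambda$, using $\#(\Lambda\cap\{w>s\})\leq 2\sqrt{t^2-s^2}+N$, gives
\begin{equation*}
\sum_{\lambda\in\Lambda\cap[-t,t]}\sqrt{t^2-\lambda^2}
=\int_0^t \#\big(\Lambda\cap\{w>s\}\big)\,ds
\leq \int_0^t\big(2\sqrt{t^2-s^2}+N\big)\,ds
=\frac{\pi t^2}{2}+Nt.
\end{equation*}
Substituting into \eqref{eq_newd} and integrating,
\begin{equation*}
\frac{4}{\pi r^2}\int_0^r\Big(\tfrac{\pi t^2}{2}+Nt\Big)\frac{dt}{t}
=1+\frac{4N}{\pi r}\xrightarrow[r\to\infty]{}1,
\end{equation*}
so that $\newD(\Lambda)\leq 1$, as claimed. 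The additive error $Nt$ is annihilated by the $1/r^2$ normalization, which is precisely the feature that makes the circular density insensitive to the (unknown, possibly large) constant $N$.

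The main obstacle is the uniform local bound itself, and in particular its two non-classical features in the present setting: the generator $g$ is \emph{not} compactly supported, and the coefficient sequence lies only in $\ell^\infty$. The exact global variation-diminishing inequality must therefore be localized. I would split $c=c'+c''$, with $c'$ supported on the indices within a fixed distance of $I$ and $c''$ the remainder; the Gaussian-type decay of $g$ makes $\sum_k c''_k g(\cdot-k)$ superpolynomially small on $I$, while total positivity bounds the zeros of $\sum_k c'_k g(\cdot-k)$ by $|I|+O(1)$. Turning this into a bound on the zeros of the full sum $f$ requires controlling a small perturbation, which is the delicate point and where I expect the real work to lie. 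It is worth emphasizing that this step cannot be replaced by complex-analytic growth estimates alone: the entire extension of $f$ satisfies only $|f(x+iy)|\lesssim e^{\pi^2 y^2/\gamma}$, and Jensen's formula then bounds the density of \emph{all} complex zeros but imposes no constraint on the density of the real ones, since real zeros contribute only subquadratically to the Jensen count. The threshold $1$ is thus a genuine consequence of total positivity and the integer-shift structure, not of the order-$2$ growth — consistent with the fact that the sampling threshold $D^{-}(\Lambda)>1$ in \eqref{eq_samp} is independent of $\gamma$.
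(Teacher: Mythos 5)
Your layer-cake reduction from the local bound to $\newD(\Lambda)\le 1$ is correct, and it even shows that a constant depending on the function would suffice, since the error $4N/(\pi r)$ vanishes in the limit no matter how large $N$ is. The genuine gap is the ``key structural input'' itself, which you do not prove and which, in the form you state it (with $N=N(g)$ depending only on the generator), is \emph{false} already in the Gaussian case $m=0$. Take $g(x)=C_0'e^{-ax^2}$, fix $K$, pick distinct $u_1,\dots,u_K\in(1,e^{2a})$, and let $b_0,\dots,b_K$ be the coefficients of the polynomial $\prod_{j=1}^K(u-u_j)$. The finitely supported (hence bounded) sequence $c_k=b_ke^{ak^2}$ gives
\begin{equation*}
f(x)=\sum_{k=0}^{K}c_k\,g(x-k)=C_0'\,e^{-ax^2}\prod_{j=1}^{K}\bigl(e^{2ax}-u_j\bigr)\in V^\infty(g),
\end{equation*}
a non-zero function with exactly $K$ real zeros, all in $(0,1)$. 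Since $V^\infty(g)$ imposes no normalization on the coefficients, no bound of the form $\#(\Lambda\cap[0,1])\le 1+N(g)$ can hold (this does not contradict the theorem: here $\newD(\{f=0\})=0$). So your lemma must be downgraded to $N=N(f)$; but in that form it is a \emph{strictly stronger} statement than the theorem --- it forces uniform control of zeros in every window, whereas $\newD(\Lambda)\le 1$ tolerates sparse bursts of arbitrarily many zeros --- and it is exactly where all the difficulty lies. Your sketch for it does not close: Schoenberg's variation-diminishing inequality is global (total sign changes of $f$ on $\mathbb{R}$ versus total sign changes of $c$, which is typically infinite for $c\in\ell^\infty$), it has no local form, and after truncating the tail you would need a lower bound for the truncated sum away from its zero set (Rouch\'e) to prevent the small tail from creating new zeros; such a bound is exactly as non-uniform as the zero counts you are trying to control. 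As it stands, the proposal defers the theorem to an unproved lemma that may well be false even in its $f$-dependent form.

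Your closing diagnosis --- that the result ``cannot be replaced by complex-analytic growth estimates alone'' --- is also misleading, because it points away from the proof that actually works. You are right that real zeros by themselves contribute only $O(1/r)$ to $r^{-2}\int_0^r n(t)\,t^{-1}dt$, so order-two growth plus Jensen gives nothing directly. But the paper's proof \emph{is} complex-analytic; the missing quadratic contribution comes from a structural fact: for Gaussian $g$, the zero set of the entire extension of $f$ is invariant under translation by $i\tfrac{\pi}{a}\mathbb{Z}$ \cite[Lemma 4.2]{GRS20}, so each real zero $\lambda$ carries the whole vertical progression $\lambda+i\tfrac{\pi}{a}\mathbb{Z}$. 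Jensen's formula applied to $F(z)=C_1z^{-n}f(z)e^{az^2/2}$, which has growth $e^{\frac{a}{2}|z|^2}$, then bounds precisely the chord sums $\sum_\lambda\sqrt{t^2-\lambda^2}$ appearing in \eqref{eq_newd} --- this is what the circular density is designed to capture --- and yields $\newD(\{f=0\})\le 1$ with the sharp constant. The general case $m\ge 1$ is an induction: with $f_1:=f+\delta_m f'\in V^\infty(g_1)$, where $g_1$ has one factor fewer, Rolle's theorem for $I+\delta_m\partial_x$ (via $\frac{d}{dx}\bigl[e^{x/\delta_m}f\bigr]=\frac{1}{\delta_m}e^{x/\delta_m}f_1$) produces zeros of $f_1$ interlacing those of $f$, and interlacing only lengthens the chords (up to an additive $2t$, harmless after the $r^{-2}$ averaging), so $\newD(\{f=0\})\le\newD(\{f_1=0\})\le 1$. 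The threshold $1$ is thus a consequence of order-two growth \emph{combined with} the vertical periodicity of zero sets and the Rolle induction; no uniform local zero count is needed anywhere, and that is precisely why the averaged density $\newD$, rather than a window-by-window bound, is the right quantity for this problem.
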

The uniqueness theorem (which applies also to non-real-valued functions) allows one to carry out the following natural proof of the sign retrieval theorem.

\begin{proof}[Proof of Theorem \ref{th_main}, assuming Theorem \ref{th_unique}]
	Assume that $D^{-}(\Lambda)>2$ and write $\Lambda = \Lambda_1 \cup \Lambda_2$ as in \eqref{eq_split}. Then, by \eqref{eq_dens_3} and \eqref{eq_sub}, either $\newD(\Lambda_1) >1$ or $\newD(\Lambda_2) >1$ (possibly both). In the first case, Theorem \ref{th_unique} shows that $f_1 \equiv f_2$, while in the second, $f_1 \equiv -f_2$.
\end{proof}

As the proof shows, Theorem \ref{th_main} remains valid if \eqref{eq_d_2} is relaxed to $\newD(\Lambda)>2$. Sign retrieval also holds under the same density condition for the shift-invariant spaces $V^p(g)$ defined with respect to $L^p$ norms, $1 \leq p \leq \infty$, as these are contained in $V^\infty(g)$.

Towards the proof of Theorem \ref{th_unique}, we first prove \eqref{eq_dens_2} and \eqref{eq_dens_3}.

\begin{lemma}\label{lemma_circ}
Let $\Lambda \subseteq \mathbb{R}$ and $\alpha>0$. Then
\eqref{eq_dens_2} and \eqref{eq_dens_3} hold true.
\end{lemma}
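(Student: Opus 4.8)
The plan is to obtain \eqref{eq_dens_2} by a direct comparison of the two‑dimensional lattice‑point count with the weighted count defining $\newD$, and to deduce \eqref{eq_dens_3} from an integration‑by‑parts formula for the latter. Throughout set $n(t)=\#\big[\Lambda\cap[-t,t]\big]$ and $S(t)=\sum_{\lambda\in\Lambda\cap[-t,t]}\sqrt{t^2-\lambda^2}$, and write $B(r)=\frac{4}{\pi r^2}\int_0^r S(t)\,\frac{dt}{t}$, so that $\newD(\Lambda)=\limsup_{r\to\infty}B(r)$ by \eqref{eq_newd}. I will repeatedly use that $S$ is continuous and non‑decreasing in $t$: existing terms grow with $t$, and each new term $\sqrt{t^2-\lambda^2}$ enters at $t=|\lambda|$ with value $0$. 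One may assume $\Lambda$ is countable, since otherwise both sides of \eqref{eq_dens_2} and the left side of \eqref{eq_dens_3} are infinite.

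For \eqref{eq_dens_2}, fix $\alpha>0$. For $\lambda$ with $|\lambda|<t$, the points of $\{\lambda\}\times\alpha\mathbb{Z}$ lying in $B_t(0)$ are those $(\lambda,\alpha k)$ with $|k|<\sqrt{t^2-\lambda^2}/\alpha$, and an elementary count gives $\big|\#\{k\in\mathbb{Z}:|k|<M\}-2M\big|<1$ for every $M>0$. Summing over $\lambda\in(\Lambda\setminus\{0\})\cap(-t,t)$ yields
\[
\Big|\,\#\big[((\Lambda\setminus\{0\})\times\alpha\mathbb{Z})\cap B_t(0)\big]-\tfrac{2}{\alpha}S(t)\,\Big|\le n(t)+t,
\]
where the extra $t$ accounts for the single term $\lambda=0$ that $S$ contains but the left count omits. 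Multiplying by $\frac{2\alpha}{\pi r^2}$ and integrating $dt/t$ then reduces \eqref{eq_dens_2} to showing that $\mathcal{E}(r):=\frac{2\alpha}{\pi r^2}\int_0^r(n(t)+t)\frac{dt}{t}$ is asymptotically negligible.

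Controlling $\mathcal{E}(r)$ is the main obstacle, since the $O(1)$ error per point could in principle accumulate. The key estimate is $n(t)\le S(2t)/(\sqrt{3}\,t)$, which follows from $S(2t)\ge\sum_{|\lambda|\le t}\sqrt{4t^2-\lambda^2}\ge\sqrt{3}\,t\,n(t)$. If $\newD(\Lambda)<\infty$, then boundedness of $B$ together with monotonicity of $S$ forces $S(t)=O(t^2)$ — indeed $\frac{\pi}{4}r^2B(r)=\int_0^rS\,\frac{dt}{t}\ge S(r/2)\log 2$ — whence $n(t)=O(t)$ and $\mathcal{E}(r)=O(1/r)\to0$, giving \eqref{eq_dens_2} with equality. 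If $\newD(\Lambda)=\infty$, both sides are infinite: discarding the points with $|\lambda|>t/\sqrt{2}$ (for which $\sqrt{t^2-\lambda^2}\ge t/\sqrt{2}$, so the relative $O(1)$ error is $o(1)$) yields the lower bound $\#\big[((\Lambda\setminus\{0\})\times\alpha\mathbb{Z})\cap B_t(0)\big]\ge\frac{2}{\alpha}(1-o(1))\,S(t/\sqrt{2})$, which after averaging is bounded below by a positive multiple of $B(r/\sqrt{2})$, and $\limsup_rB(r/\sqrt{2})=\newD(\Lambda)=\infty$.

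Finally, for \eqref{eq_dens_3} I would express $S$ as a Stieltjes integral against $s\mapsto n(s)$ and integrate by parts:
\[
S(t)=\int_0^t\sqrt{t^2-s^2}\,dn(s)=\int_0^t\frac{s\,n(s)}{\sqrt{t^2-s^2}}\,ds+O(t).
\]
Fixing $\beta<D^{-}(\Lambda)$, the definition of the lower density (specialized to windows centered at the origin) yields $n(s)\ge2\beta s$ for all large $s$; together with the identity $\int_0^t\frac{s^2}{\sqrt{t^2-s^2}}\,ds=\frac{\pi}{4}t^2$ this gives $S(t)\ge\frac{\pi\beta}{2}t^2-o(t^2)$. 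Substituting into $B(r)$ produces $B(r)\ge\beta-o(1)$, so $\newD(\Lambda)\ge\beta$, and letting $\beta\uparrow D^{-}(\Lambda)$ proves \eqref{eq_dens_3}. The one point requiring care is that $n(s)\ge2\beta s$ holds only for large $s$, so I would split off the contribution of small $s$ and verify it is absorbed by the $o(t^2)$ remainder.
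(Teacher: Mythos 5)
Your proposal is correct in substance but takes a genuinely different route from the paper, for both halves of the lemma. For \eqref{eq_dens_2}, the paper avoids any analysis of cumulative error: it sandwiches the lattice count between the weighted sums evaluated on slightly dilated disks $B_{(1\pm\varepsilon)t}(0)$, absorbing the $\pm 1$-per-point error into the dilation once $t \geq C_{\alpha,\varepsilon}$, and then lets $\varepsilon \to 0$. You instead keep the disks fixed, bound the accumulated error by $\frac{2\alpha}{\pi r^2}\int_0^r (n(t)+t)\frac{dt}{t}$, and control it via the self-bounding observation $n(t)\le S(2t)/(\sqrt{3}\,t)$ together with $S(t)=O(t^2)$ when $\newD(\Lambda)<\infty$, treating $\newD(\Lambda)=\infty$ separately. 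Both work; the paper's dilation trick needs no case distinction, while your version gives a quantitative $O(1/r)$ error rate. For \eqref{eq_dens_3} the divergence is sharper: the paper deduces it \emph{from} \eqref{eq_dens_2} with $\alpha=1$, comparing the origin-centered count to the infimum over all centers and invoking the fact that the two-dimensional lower Beurling density of $\Lambda'\times\mathbb{Z}$ equals $D^{-}(\Lambda)$. Your argument is independent of \eqref{eq_dens_2} and entirely one-dimensional: the identity $S(t)=\int_0^t\frac{s\,n(s)}{\sqrt{t^2-s^2}}\,ds+O(t)$ combined with $\int_0^t\frac{s^2}{\sqrt{t^2-s^2}}\,ds=\frac{\pi}{4}t^2$, and it actually proves the formally stronger bound $\newD(\Lambda)\ge \liminf_{r}\frac{\#[\Lambda\cap[-r,r]]}{2r}$, the origin-centered lower density, which dominates $D^{-}(\Lambda)$.

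One step does need repair. As written, your error functional $\mathcal{E}(r)=\frac{2\alpha}{\pi r^2}\int_0^r(n(t)+t)\frac{dt}{t}$ is identically $+\infty$ whenever $0\in\Lambda$: then $n(t)\ge 1$ for all $t>0$ and $\int_0^r\frac{dt}{t}$ diverges, so the claim $\mathcal{E}(r)=O(1/r)$ fails ($n(t)=O(t)$ cannot hold as $t\to 0^{+}$). The fix is already implicit in your derivation: since you sum over $(\Lambda\setminus\{0\})\cap(-t,t)$, the per-point error is bounded by $n'(t):=\#\big[(\Lambda\setminus\{0\})\cap(-t,t)\big]$ rather than $n(t)$ (and the $\lambda=0$ term costs $2t/\alpha$, not $t$ --- harmless). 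In the case $\newD(\Lambda)<\infty$, the set $\Lambda$ can have no accumulation points (an accumulation point makes $S(t)=\infty$ for all large $t$), so $n'$ vanishes on a neighborhood of $0$, the integral $\int_0^r n'(t)\frac{dt}{t}$ converges and is $O(r)$, and your argument goes through. This is exactly the issue the paper handles by passing to $\Lambda'=\Lambda\setminus\{0\}$, truncating the integrals to $[C,r]$, and using that $\Lambda'\cap(-\eta,\eta)=\emptyset$ for some $\eta>0$. Relatedly, for \eqref{eq_dens_3} you should note explicitly that if $\Lambda$ has an accumulation point the claim is trivial because $\newD(\Lambda)=\infty$; otherwise $n$ need not be locally finite and the Stieltjes integration by parts is not justified --- your blanket reduction to countable $\Lambda$ does not cover this case.
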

\begin{proof}
We assume that $\Lambda$ has no accumulation points, since, otherwise, both sides of \eqref{eq_dens_2} are infinite. Denote provisionally the right hand side of \eqref{eq_dens_2} by $\newDD(\Lambda)$, and set $\Lambda' := \Lambda \setminus \{0\}$.

\noindent \emph{Step 1}. Let $\varepsilon \in (0,1)$,
and $C=C_{\alpha,\varepsilon}>0$ a constant to be specified. We claim:
\begin{align}
\label{eq_a1}
\newD(\Lambda)& = 
\limsup_{r \longrightarrow \infty} \frac{4}{\pi r^2}
\int_{C} ^r \sum_{\lambda \in \Lambda' \cap [-t,t]} 
\sqrt{t^2 - \lambda^2} \,
\frac{dt}{t},
\\
\label{eq_a2}
(1+\delta)^{2}
\newDD(\Lambda) &= \limsup_{r \longrightarrow \infty}
\frac{2\alpha}{\pi r^2} \int_{C}^r \# \big[ (\Lambda' \times \alpha \mathbb{Z}) \cap B_{(1+\delta)t}(0) \big] \,\frac{dt}{t}, \qquad \delta=\pm \varepsilon.
\end{align}
To prove the first claim, we first note that $\newD(\Lambda)=\newD(\Lambda')$, since, if $0 \in \Lambda$, the contribution of the point $\lambda=0$ is
\begin{align*}
\frac{4}{\pi r^2}
\int_0^r dt = \frac{4}{\pi r},
\end{align*}
and does not affect the limit on $r$. Similarly, since $\sqrt{t^2 - \lambda^2} \leq t$,
\begin{align*}
\frac{4}{\pi r^2}
\int_0^C \sum_{\lambda \in \Lambda' \cap [-t,t]} 
\sqrt{t^2 - \lambda^2}
\frac{dt}{t} 
\leq  \# \big( \Lambda' \cap [-C,C]\big) \cdot \frac{4C}{\pi r^2}.
\end{align*}
The last quantity is finite because $\Lambda$ has no accumulation points. This proves \eqref{eq_a1}, because it shows that limiting the integral to $[C,r]$ does not affect the limit on $r$. Second, a change of variables shows that \eqref{eq_a2} holds with $C$ replaced by $0$. In addition, since $\Lambda$ has no accumulation points, there exists $\eta>0$ such that $\Lambda'\cap(-\eta,\eta)=\emptyset$. Therefore, we can estimate
the part of the integral excluded in \eqref{eq_a2} as
\begin{align*}
\frac{2\alpha}{\pi r^2} \int_{0}^C \# \big[(\Lambda' \times \alpha \mathbb{Z}) \cap B_{(1+\delta)t}(0) \big]\,\frac{dt}{t}
\leq \# \big[ (\Lambda' \times \alpha \mathbb{Z}) \cap B_{2C}(0) \big]
\cdot \frac{2\alpha}{\pi r^2} \int_{\frac{\eta}{1+\varepsilon}}^C \frac{dt}{t},
\end{align*}
which is finite because $\eta>0$. This proves \eqref{eq_a2}.

\noindent \emph{Step 2}. For $t>0$, we note that
\begin{align*}
\# (\Lambda' \times \alpha \mathbb{Z}) \cap B_t(0)
&= \sum_{\lambda \in \Lambda' \cap [-t,t]} 
\# \left\{k \in \mathbb{Z}: \lambda^2+\alpha^2 k^2 < t^2 \right\}
\\
&= \sum_{\lambda \in \Lambda' \cap [-t,t]} 
\# \left\{k \in \mathbb{Z}: \abs{k}  < \alpha^{-1} \sqrt{t^2 - \lambda^2} \right\}.
\end{align*}
Next, we choose the constant $C_{\alpha, \varepsilon}$ so that the following estimates hold for $t \geq C_{\alpha, \varepsilon}$:
\begin{align*}
&\# \big[ (\Lambda' \times \alpha \mathbb{Z}) \cap B_{(1+\varepsilon)t}(0) \big]
\geq 
\sum_{\lambda \in \Lambda' \cap [-t,t]} 
\left[
\frac{2}{\alpha} \sqrt{(1+\varepsilon)^2 t^2 - \lambda^2} - 1\right]
\\
&\qquad=\sum_{\lambda \in \Lambda' \cap [-t,t]} 
\frac{2}{\alpha} \sqrt{(1+\varepsilon)^2 t^2 - \lambda^2
- \alpha \sqrt{(1+\varepsilon)^2 t^2 - \lambda^2} + \alpha^2/4
}
\\
&\qquad\geq\sum_{\lambda \in \Lambda' \cap [-t,t]} 
\frac{2}{\alpha} \sqrt{(1+\varepsilon)^2 t^2 - \lambda^2
	- \alpha (1+\varepsilon)t + \alpha^2/4
}
\\
&\qquad\geq 
\sum_{\lambda \in \Lambda' \cap [-t,t]} 
\frac{2}{\alpha} \sqrt{t^2 - \lambda^2},
\end{align*}
where the last estimate requires choosing $C_{\alpha, \varepsilon}$ large. Similarly,
\begin{align*}
&\# \big[(\Lambda' \times \alpha \mathbb{Z}) \cap B_{(1-\varepsilon)t}(0)\big]
\leq
\sum_{\lambda \in \Lambda' \cap [-(1-\varepsilon)t,(1-\varepsilon)t]} 
\left[
\frac{2}{\alpha} \sqrt{(1-\varepsilon)^2 t^2 - \lambda^2} + 1\right]
\\
&\qquad=
\sum_{\lambda \in \Lambda' \cap [-(1-\varepsilon)t,(1-\varepsilon)t]} 
\frac{2}{\alpha} \sqrt{(1-\varepsilon)^2 t^2 - \lambda^2
+ \alpha\sqrt{(1-\varepsilon)^2 t^2 - \lambda^2}
+\alpha^2/4
}
\\
&\qquad\leq
\sum_{\lambda \in \Lambda' \cap [-(1-\varepsilon)t,(1-\varepsilon)t]} 
\frac{2}{\alpha} \sqrt{(1-\varepsilon)^2 t^2 - \lambda^2
	+ \alpha {(1-\varepsilon) t}
	+\alpha^2/4
}
\\
&\qquad \leq
\sum_{\lambda \in \Lambda' \cap [-t,t]} 
\frac{2}{\alpha} \sqrt{t^2 - \lambda^2},
\end{align*}
where the last estimate requires choosing $C_{\alpha, \varepsilon}$ large.
Combining the last two estimates with \eqref{eq_a1} and \eqref{eq_a2}, and letting $\varepsilon \longrightarrow 0$,
we deduce \eqref{eq_dens_2}.

\noindent \emph{Step 3}. To prove \eqref{eq_dens_3}, let $r>s>0$ and use \eqref{eq_dens_2} with $\alpha=1$ to estimate
\begin{align*}
\newD(\Lambda) &\geq \liminf_{r \longrightarrow \infty}
\frac{2}{\pi r^2}
\int_s^r \inf_{z \in \mathbb{R}^2}
\# \big[ (\Lambda' \times \mathbb{Z}) \cap B_t(z) \big] \,\frac{dt}{t}
\\
& \geq \inf_{t \geq s} 
\left[
\tfrac{1}{\abs{B_t(0)}} \inf_{z \in \mathbb{R}^2}
\# \big[ (\Lambda' \times \mathbb{Z}) \cap B_t(z)\big] \right]
\liminf_{r \longrightarrow \infty}
\frac{2}{\pi r^2}
\int_s^r \abs{B_t(0)} \,\frac{dt}{t}
\\
&=\inf_{t \geq s} 
\left[
\inf_{z \in \mathbb{R}^2} \tfrac{1}{\abs{B_t(z)}} 
\# \big[ (\Lambda' \times \mathbb{Z}) \cap B_t(z) \big] \right].
\end{align*}
Letting $s \longrightarrow \infty$, we see that
\begin{align*}
\newD(\Lambda) \geq \liminf_{t \longrightarrow \infty}
\inf_{z \in \mathbb{R}^2} \tfrac{1}{\abs{B_t(z)}} 
\# \big[ (\Lambda' \times \mathbb{Z}) \cap B_t(z) \big] = D^{-}(\Lambda') = D^{-}(\Lambda),
\end{align*}
where we used that the two-dimensional lower Beurling density of 
$\Lambda' \times \mathbb{Z}$ is $D^{-}(\Lambda)$.
\end{proof}

We can now prove the uniqueness Theorem. The proof builds on \cite[Lemma 5.1]{GRS18} and \cite[Theorems 4.3 and 4.9]{GRS20}.

\begin{proof}[Proof of Theorem \ref{th_unique}]
	We proceed by induction on $m$ in \eqref{eq_tpgt}.
	
	\smallskip
	
	\noindent \emph{In the case $m=0$}, the function $g$ is a Gaussian $g(x) = C'_0 e^{-a x^2}$, with $a=\frac{\pi^2}{\gamma}>0$. Let
	$f=\sum_k c_k g(\cdot-k)\in V^\infty (g)$ with $c$ bounded
	be non-zero, and denote by $\Lambda \subseteq \mathbb{R}$ its zero set.
	
	As shown in \cite[Lemma 4.1]{GRS20}, $f$ possesses an extension to an
	entire function on $\mathbb{C}$, satisfying the growth estimate
	$|f(x+iy)| \lesssim e^{a y^2}$ for $x,y \in \bR$,
	where the implied constant depends on $f$.
	Let $n \geq 0$ be the order of $f$ at $z=0$, and consider the analytic function \[F(z) := C_1 z^{-n} f(z) e^{\frac{a}{2} z^2},\]
	where $C_1 \in \mathbb{C}$ is chosen so that $F(0)=1$. Then $F$ satisfies
	\begin{align}\label{eq_growth}
	\abs{F(x+iy)} \leq C e^{ay^2} e^{\frac{a}{2} (x^2-y^2)} = C e^{\frac{a}{2} (x^2+y^2)}, \qquad x,y \in \mathbb{R},
	\end{align}
	for some constant $C>0$.
	Moreover, the zero set of $f$ is invariant under addition of $i\tfrac{\pi}{a}\mathbb{Z}$ \cite[Lemma 4.2]{GRS20}. Therefore, 
	the set of complex zeros of $F$ contains $\big(\Lambda \setminus\{0\}\big) + i\tfrac{\pi}{a}\mathbb{Z}$:
	\begin{align*}
	F \big(\lambda + i \tfrac{\pi}{a} k \big) = 0,
	\qquad \lambda \in \Lambda \setminus \{0\}, k \in \mathbb{Z}.
	\end{align*}
	By \eqref{eq_dens_2} with $\alpha=\tfrac{\pi}{a}$, the zero-counting function of $F$,
	\begin{align*}
	n_F(t) :=
	\# \{ z \in \mathbb{C}: F(z)=0, \abs{z} \leq t \},
	\end{align*}
	satisfies
	\begin{align}\label{eq_1}
	\limsup_{r \longrightarrow \infty}
	\frac{1}{r^2} 
	\int_0^r \frac{n_F(t)}{t} dt \geq \frac{a}{2} \newD(\Lambda).
	\end{align}
	On the other hand,
	by Jensen's formula combined with \eqref{eq_growth}, for all $r>0$,
	\begin{align}\label{eq_2}
	\frac{1}{r^2} 
	\int_0^r \frac{n_F(t)}{t} dt =
	\frac{1}{2\pi r^2} \int_0^{2\pi} \log|{F\big(r
		e^{i\theta}\big)}| d\theta \leq \frac{\log(C)}{r^2} + \frac{a}{2}.
	\end{align}
	By combining \eqref{eq_1} and \eqref{eq_2} we conclude that $\newD(\Lambda) \leq 1$, as claimed. Note, in particular, that $\Lambda$ has no accumulation points.
	
	\bigskip
	
	\noindent \emph{Inductive step.} Let $g$ be as in \eqref{eq_tpgt} and $f=\sum_k c_k g(\cdot-k) \in V^\infty(g)$ non-zero. Without loss of generality we assume that, in \eqref{eq_tpgt}, $\delta_1, \ldots, \delta_m$ are non-zero.	
	Since $g$ is real-valued, by replacing $c_k$ with
	$\Re(c_k)$ or $\Im(c_k)$ if necessary, we may assume that $f$ is real-valued. Let $g_1$ be given by \eqref{eq_tpgt}, except that the product runs only up to $m-1$. Then
	\begin{align*}
	f_1 := f + \delta_m f' = \sum_{k\in\mathbb{Z}} c_k g_1(\cdot-k)
	\end{align*}
	belongs to $V^\infty(g_1)$ and is real-valued and non-zero, because the coefficients $c_k$ are real and at least one of them is non-zero.
	We assume by inductive hypothesis that $\newD(\{f_1=0\}) \leq 1$.
	
		Let $\Lambda=\{f=0\}$ and $\Gamma=\{f_1=0\}$. Suppose that $\Lambda$ has no accumulation points. Assume for the moment that, in addition, $f$ has a non-negative zero, and order the set of non-negative zeros of $f$ increasingly:
	\begin{align*}
	\Lambda \cap [0,\infty)=\{\lambda_k : k=0,\ldots,N\},
	\end{align*}
	where $N \in \mathbb{N}\cup\{0,\infty\}$, and we do not count multiplicities. Hence, $\lambda_k >0$, if $k>0$.
	
	By Rolle's Theorem applied to the differential operator $I + \delta_m \partial_x$, there is a sequence of zeros of $f_1$ that interlaces $\Lambda \cap [0,\infty)$. (Indeed, we note that $\frac{d}{dx} \big[ e^{x/\delta_m} f(x)\big] = \frac{1}{\delta_m} e^{x/\delta_m} f_1(x)$ and apply the standard version of Rolle's theorem; see \cite[Lemma 5.1]{GRS18} or \cite[Lemma 4.8]{GRS20} for details.) We parameterize  that sequence as
	$\Gamma^+=\big\{\gamma_k : k=1,\ldots,N\}$, with
	\begin{align*}
	\lambda_{k-1} < \gamma_k < \lambda_k, \qquad k>0.
	\end{align*}
	The set $\Gamma^+$ is empty if $N=0$.	For $t>0$, this ordering associates with each $\lambda_k \in (0,t]$, $k \not=0$, a distinct zero of $f_1$, $\gamma_k \in (0,t]$, such that
	the vertical segment through $\gamma_k$ contained in the disk $B_t(0)$ is longer than the corresponding segment through $\lambda_k$:
	\begin{align*}	
	\sum_{k >0, \lambda_k \in (0,t] } \sqrt{t^2 - \lambda_k^2}
	&\leq \sum_{k >0, \gamma_k \in (0,t] } \sqrt{t^2 - \gamma_k^2}
	\leq \sum_{\gamma \in \Gamma \cap (0,t] } \sqrt{t^2 - \gamma^2};
	\end{align*}
	see Figure \ref{fig}.	
	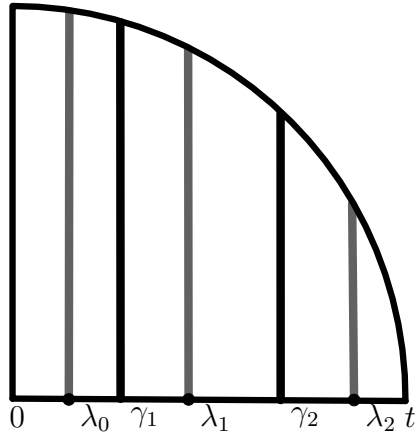
\begin{figure}[ht]
		\definecolor{wrwrwr}{rgb}{0.3803921568627451,0.3803921568627451,0.3803921568627451}
		\scalebox{.4}{
			\begin{tikzpicture}[line cap=round,line join=round,>=triangle 45,x=1cm,y=1cm]
			\clip(-10.015451103368168,-6.401813472706154) rectangle (11.716953077816502,9.608639488966313);
			\draw [line width=8pt,color=wrwrwr] (-6.300762774187206,8)-- (-6.3194,-4.9078);
			\draw [line width=8pt] (-4.608688292274559,7.64)-- (-4.5994,-4.8878);
			\draw [line width=8pt,color=wrwrwr] (-2.353670546206893,6.75)-- (-2.3394,-4.9078);
			\draw [line width=8pt] (0.7179658342394823,4.60)-- (0.7206,-4.9278);
			\draw [line width=8pt,color=wrwrwr] (3.097243468917373,1.6)-- (3.1606,-4.9278);
			\draw [fill=black] (-8.19,-4.88) circle (2.5pt);
			\draw[color=black] (-8.029388385598132,-5.5) node{\scalebox{2.5} {0}};
			\draw [fill=black] (4.87,-4.96) circle (2.5pt);
			\draw[color=black] (5.032632520325212,-5.5) node {\scalebox{2.5}{$t$}};
			\draw [fill=black] (-6.3194,-4.9078) circle (5.5pt);
			\draw[color=black] (-5.433337282229956,-5.5) node {\scalebox{2.5}{{$\lambda_0$}}};
			\draw [fill=black] (-4.5994,-4.8878) ++(-2.5pt,0 pt) -- ++(2.5pt,2.5pt)--++(2.5pt,-2.5pt)--++(-2.5pt,-2.5pt)--++(-2.5pt,2.5pt);
			\draw[color=black] (-3.798029500580711,-5.5) node {\scalebox{2.5}{$\gamma_1$}};
			\draw [fill=black] (-2.3394,-4.9078) circle (5.5pt);
			\draw[color=black] (-1.4472745644599214,-5.5) node {\scalebox{2.5}{$\lambda_1$}};
			\draw [fill=black] (0.7206,-4.9278) ++(-2.5pt,0 pt) -- ++(2.5pt,2.5pt)--++(2.5pt,-2.5pt)--++(-2.5pt,-2.5pt)--++(-2.5pt,2.5pt);
			\draw[color=black] (1.5371621370499509,-5.5) node {\scalebox{2.5}{$\gamma_2$}};
			\draw [fill=black] (3.1606,-4.9278) circle (5.5pt);
			\draw[color=black] (4.0514478513356655,-5.5) node {\scalebox{2.5}{$\lambda_2$}};
			\draw [shift={(-8.19,-4.88)},line width=6pt]  (0,0) --  plot[domain=-0.006125497658366008:1.5707963267948966,variable=\t]({1*13.060245020672467*cos(\t r)+0*13.060245020672467*sin(\t r)},{0*13.060245020672467*cos(\t r)+1*13.060245020672467*sin(\t r)}) -- cycle ;
			\end{tikzpicture}}
		\caption{Each positive zero $\lambda_k$ of $f$ is assigned to a zero $\gamma_k$ of $f_1$ in such a way that
			the corresponding vertical segments contained in the disk $B_t(0)$ become longer.}
		\label{fig}
	\end{figure}
	We now add the term corresponding to $\lambda_0$, and bound $\sqrt{t^2 - \lambda_0^2} \leq t$ to obtain 
	\begin{align*}
	\sum_{\lambda \in \Lambda \cap [0,t] } \sqrt{t^2 - \lambda^2}
	\leq t + 
	\sum_{\gamma \in \Gamma \cap (0,t] } \sqrt{t^2 - \gamma^2}.
	\end{align*}
	The previous estimate is also trivially true is $f$ has no non-negative zeros.
	Arguing similarly with the non-positive zeros of $f$ we conclude that
	\begin{align*}
	\sum_{\lambda \in \Lambda \cap [-t,t] } \sqrt{t^2 - \lambda^2}
	\leq 2t + 
	\sum_{\gamma \in \Gamma \cap [-t,t] } \sqrt{t^2 - \gamma^2}.
	\end{align*}
	This shows that $\newD(\{f=0\}) \leq \newD(\{f_1=0\})$. The latter quantity is bounded by $1$ by inductive hypothesis. Finally, if $\Lambda=\{f=0\}$ has an accumulation point, Rolle's theorem applied as before shows that so does $\Gamma=\{f_1=0\}$, and, therefore,
	$\newD(\{f_1=0\}) = \infty$ contradicting the inductive hypothesis.
\end{proof}

\definecolor{wwwwww}{rgb}{0.4,0.4,0.4}
\definecolor{ududff}{rgb}{0.30196078431372547,0.30196078431372547,1}
\definecolor{xdxdff}{rgb}{0.49019607843137253,0.49019607843137253,1}


\begin{thebibliography}{1}
	
	\bibitem{MR3656501}
	R.~Alaifari and P.~Grohs.
	\newblock Phase retrieval in the general setting of continuous frames for
	{B}anach spaces.
	\newblock {\em SIAM J. Math. Anal.}, 49(3):1895--1911, 2017.
	
	\bibitem{MR3977121}
	C.~Cheng, J.~Jiang, and Q.~Sun.
	\newblock Phaseless sampling and reconstruction of real-valued signals in
	shift-invariant spaces.
	\newblock {\em J. Fourier Anal. Appl.}, 25(4):1361--1394, 2019.
	
	\bibitem{gro20}
	K.~Gr\"{o}chenig.
	\newblock Phase-retrieval in shift-invariant spaces with {G}aussian generator.
	\newblock {\em J. Fourier Anal. Appl.}, 26(3):Paper No. 52, 15, 2020.
	
	\bibitem{GRS18}
	K.~Gr\"{o}chenig, J.~L. Romero, and J.~St\"{o}ckler.
	\newblock Sampling theorems for shift-invariant spaces, {G}abor frames, and
	totally positive functions.
	\newblock {\em Invent. Math.}, 211(3):1119--1148, 2018.
	
	\bibitem{GRS20}
	K.~Gr\"{o}chenig, J.~L. Romero, and J.~St\"{o}ckler.
	\newblock Sharp results on sampling with derivatives in shift-invariant spaces
	and multi-window {G}abor frames.
	\newblock {\em Constr. Approx.}, 51(1):1--25, 2020.
	
	\bibitem{jm91}
	R.~Q. Jia and C.~A. Micchelli.
	\newblock Using the refinement equations for the construction of pre-wavelets.
	{II}. {P}owers of two.
	\newblock In {\em Curves and surfaces ({C}hamonix-{M}ont-{B}lanc, 1990)}, pages
	209--246. Academic Press, Boston, MA, 1991.
	
\end{thebibliography}
\end{document}